\newcounter{w}
\renewenvironment{enumerate}{\begin{list}{(\arabic{w})}{\usecounter{w}\setlength{\leftmargin}{50pt}}}
{\end{list}}
\newcommand{\qee}{ \hfill\hspace{2pt}$\triangle$}
\newtheorem{thm}{Theorem}[section]
\newtheorem{corol}[thm]{Corollary}
\newtheorem{prop}[thm]{Proposition}
\theoremstyle{remark}
\newtheorem{rema}[thm]{Remark}
\newtheorem{exe}[thm]{Example}
\newcommand{\iso}{ \ \raise 4pt\hbox{$\sim$} \kern -10pt\hbox{$\to$}\  }
\newcommand{\cO}{{\mathcal O}}
\newcommand{\PP}{{\mathbb P}}
\newcommand{\Ext}{{\rm Ext}}
\newcommand{\cI}{{\mathcal I}}
\newcommand{\M}{{\mathfrak M}}
\newcommand{\F}{{\mathcal F}}
\newcommand{\E}{{\mathcal E}}
\newcommand{\Z}{{\mathbb Z}}
\newcommand{\C}{{\mathbb C}}
\newcommand{\FF}{{\mathbb F}}
\newcommand{\beq}{\begin{equation}}
\newcommand{\eeq}{\end{equation}}
\newcommand{\beqa}{\begin{eqnarray}}
\newcommand{\eeqa}{\end{eqnarray}}
\newcommand{\uno}{\mbox{1\kern-.59em {\rm l}}}
\newcommand{\nn}{\nonumber}
\newcommand{\be}{\begin{equation}}
\newcommand{\ee}{\end{equation}}
\newcommand{\bea}{\begin{eqnarray}}
\newcommand{\eea}{\end{eqnarray}}
\newcommand{\tinyyoung}[1]{\mbox{\tiny\young(#1)}}
\begin{document}
\begin{flushright} SISSA Preprint 55/2008/FM \\ 
{\tt arXiv:0809.0155 [math.AG]}  
\end{flushright} 
\bigskip\bigskip
\title{Instanton counting on Hirzebruch surfaces}
\bigskip
\date{\today}
\subjclass[2000]{14D20; 14D21;14J60; 81T30; 81T45} 
\keywords{Instantons, framed sheaves, moduli spaces, Poincar\'e polynomial, partition functions}
\thanks{This research was partly supported by the INFN Research Project PI14 ``Nonperturbative dynamics of gauge theory", by   PRIN    ``Geometria delle variet\`a algebriche"
and by an Institutional Partnership Grant of the Humboldt
foundation of Germany.
 \\[5pt] \indent E-mail: {\tt bruzzo@sissa.it}, {\tt poghos@yerphi.am}, {\tt tanzini@sissa.it} }
 \maketitle \thispagestyle{empty}
\begin{center}{\sc Ugo Bruzzo}$^\ddag$,  {\sc Rubik Poghossian}$^\S$ and {\sc Alessandro Tanzini}$^\ddag$ 
\\[10pt]  {\small 
$^\ddag$ Scuola Internazionale Superiore di Studi Avanzati, \\ Via Beirut 2-4, 34013
Trieste, Italia\\ and Istituto Nazionale di Fisica Nucleare, Sezione di Trieste
}
\\[10pt]  {\small 
$^\S$ Yerevan Physics Institute,
Alikhanian Br. st. 2, \\ 0036 Yerevan, Armenia}
\end{center}

\bigskip\bigskip

\begin{abstract} We perform a study of the moduli space of framed torsion free sheaves on Hirzebruch surfaces by using localization techniques. After discussing general properties of this moduli space, we classify its fixed points under the appropriate toric action and compute its  Poincar\'e polynomial.   From the physical viewpoint,  our results provide the partition function of N=4 Vafa-Witten theory on Hirzebruch surfaces, which is relevant in black hole entropy counting problems according to a conjecture due to Ooguri, Strominger and Vafa.  \end{abstract}

\bigskip
\section{Introduction}

In this paper we study the moduli space of framed torsion-free sheaves on a Hirzebruch surface
$\FF_p$. After providing some clues on the structure of this moduli space, we study a natural toric action on it,
determining its fixed points and the weight decomposition of the tangent spaces at the fixed points. This in turn allows us to compute the Morse indexes at the fixed points, and therefore the Poincar\'e polynomial of the moduli space. 

In this introduction we want to provide some physical motivations and perspectives for this study.
The Ooguri Strominger and Vafa conjecture \cite{osv} relates the counting of microstates of
supersymmetric black holes to that of bound states of Dp-branes wrapping cycles of
the internal Calabi-Yau three-fold in string theory compactifications.
For the D4-D2-D0 system, with the Euclidean D4 branes wrapping a four cycle $M$ of the
Calabi-Yau, this counting problem  has 
a rigorous mathematical definition in terms of the generating functional
of the Euler characteristics of instanton moduli spaces on $M$
with second and first Chern characters fixed by 
to the numer of D0-branes and D2-branes respectively.
In physical terms, this corresponds to the partition function
of the Vafa-Witten twisted N=4 supersymmetric theory on $M$ \cite{VW}. 

In \cite{aosv} the example of a local Calabi-Yau modeled as the total
space of a rank two bundle over a Riemann surface $\Sigma_g$
was studied in detail, providing some support for the
conjecture. 
Very little is known about the moduli space of instantons on such
spaces; indeed the authors of \cite{aosv} proceeded by conjecturing
a reduction to the base $\Sigma_g$.
An argument clarifying this reduction was provided in \cite{BonTa}
via path-integral localisation. Actually, a direct calculation of the instanton partition function
can be performed in a limited number of examples, namely
the case of $\cO_{\PP^1}(-1)$,  where the blowup formulas of
\cite{VW,Yoshi}   apply, and the ALE space $\cO_{\PP^1}(-2)$ \cite{naka-ale}.

A conjectural formula for instanton counting on $\cO_{\PP^1}(-p)$ was given in
\cite{fmr} and   \cite{GSST} by using string theory techniques
and   two-dimensional reduction, respectively. 
Hirzebruch surfaces    $\FF_p$ are  projective
compactifications of the   spaces $\cO_{\PP^1}(-p)$, 
and thus the results we obtain in this paper are related to the results mentioned
above and aim to give a rigorous proof of them.
Notice however that since we are working on a compact manifold
we are able to describe only a subset of bundles, namely those
with integers topological numbers.
We conjecture that our present results describe the topological properties
of the moduli space of instantons on $\cO_{\PP^1}(-p)$ which
are framed to the trivial connection at infinity.  
We provide a check of this conjecture for the case $p=2$, in which
we perform an independent computation of the Poincar\'e polynomial
by using the Kronheimer-Nakajima construction on ALE spaces
\cite{KN}. One can conjecture that bundles on $X_p$ with fractional topological invariants
relate to instantons that are framed to flat connections with nontrivial holonomy at the $\cO_{\PP^1}(-p)$  level,
and could be described in terms of orbifold sheaves at the $\FF_p$ level (or, better to say, on some orbifold replacing $\FF_p$).
We hope to report on this conjecture in a further paper.

While we were working on the completion of this paper, we became aware of the preprint \cite{GaLiu},
which has some overlap with the present work.

{\bf Ackowledgements.} We thank Hua-Lian Chang, Rainald Flume, Francesco Fucito, Emanuele Macr\`\i, Dimitri Markushevich, J.~Francisco Morales and Claudio Rava for useful suggestions. The second author acknowledges hospitality and support from SISSA. 

\bigskip\section{Moduli of framed sheaves on Hirzebruch surfaces} 
In this section we brieflt study the structure of the moduli space of framed sheaves on Hirzebruch surfaces. 
We denote by $\FF_p $ the $p$-th Hirzebruch surface
$\FF_p=\PP(\cO_{\PP^1} \oplus\cO_{\PP^1}(-p))$, which is the projective closure
of the total space $X_p$ of the line bundle $\cO_{\PP^1}(-p)$ on $\PP^1$
(a useful reference about Hirzebruch surfaces is \cite{BHPV}).
This may be explicitly described as the divisor in $ \PP^2\times\PP^1$
$$
\FF_p =  \{ ([z_0 : z_1 : z_2], [z : w] \in \PP^2\times\PP^1\mid
   z_1 w^p = z_2 z^p \},
$$
Denoting by $f: \FF_p \to \PP^2$   the projection onto $\PP^2$,
we fix a line in $\PP^2$ not going through $[1:0:0]$ (``line at infinity") $l_\infty$ and denote
$C_\infty=f^{-1}(l_\infty)$. The Picard group of $\FF_p$ is generated
by $C_\infty$ and the fibre $F$ of the projection $\FF_p\to\PP^1$.
One has
$$C_\infty^2=p,\qquad C_\infty\cdot F=1,\qquad F^2=0\,.$$
The canonical divisor $K_{p}$ may be expressed as
$$
K_{p} = - 2 C_\infty + (p-2) F.
$$

The two-dimensional algebraic torus $\C^\ast\times\C^\ast$ acts
 on $\FF_p$  according to
$$
([z_0 : z_1 : z_2], [z : w]) \longrightarrow
([z_0 : t_1^p z_1 : t_2^p z_2], [t_1 z : t_2 w])
$$
The divisor $C=f^{-1}([1:0:0])$ is invariant under this action. Note that one has
$C=C_\infty-pF$ as divisors modulo linear equivalence. 

We shall consider the moduli space $\M^p(r,k,n)$ parametrizing isomorphism classes
of pairs $(\E,\phi)$, where 
\begin{itemize}\item $\E$ is a torsion-free coherent sheaf on $\FF_p$, whose
topological invariants are the rank $r$, the first Chern class $c_1(\E)=kC$, and the
discriminant
$$ \Delta(\E)=c_2(\E)-\frac{r-1}{2r}c_1^2(\E)= n;$$
\item $\phi$ is a framing on $C_\infty$, , i.e., an isomorphism of the restriction of $\E$ to
$C_\infty$ with the trivial rank $r$ sheaf on $C_\infty$:
$$\phi\colon \E_{\vert C_\infty} \iso \cO_{C_\infty}^{\oplus r}.$$
\end{itemize}
In constructing the moduli space $\M^p(r,k,n)$ one only considers isomorphisms which preserve
the framing, i.e., $(\E,\phi)\simeq(\E',\phi')$ if there is an isomorphism $\psi\colon\E\to\E'$ such that
$\phi'\circ\psi=\phi$. While a point in $\M^p(r,k,n)$  should always be denoted as the class
of a pair $(\E,\phi)$, we shall be occasionally a little sloppy in our notation,  omitting the framing $\phi$.

For $r=1$, the value of $k$ is irrelevant, and every moduli space  $\M^p(1,k,n)$
turns out to be isomorphic to the Hilbert scheme $X_p^{[n]}$ parametrizing
length $n$ 0-dimensional subschemes of $X_p$. The structure of the moduli space
of $r>1$ can be studied using an ADHM description \cite{Rava}, which shows that the moduli space is a smooth algebraic variety of dimension $2rn$. However we are not going
to give this description here.

\subsection{Structure of the moduli space} To study the structure of the   space $\M^p(r,k,n)$ we resort to the theory of \emph{stable pairs}. In this connection see also \cite{Santos}.
\begin{prop} The moduli space $\M^p(r,k,n)$, when nonempty, is a smooth irreducible quasi-projective variety
of dimension $2rn$. Its tangent space at a point $[\E]$ is isomorphic to the vector space
$\Ext^1(\E,\E(-C_\infty))$. 
\end{prop}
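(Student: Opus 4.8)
\emph{Plan of proof.} The plan is to realise $\M^p(r,k,n)$ as a moduli space of stable pairs (in the sense of the theory of stable pairs referred to above, cf.\ \cite{Santos}) and then to extract the three assertions from the deformation theory of framed sheaves. First I would record that $C_\infty$ is nef (since $C_\infty\cdot F=1$ and $C_\infty\cdot C=C_\infty\cdot(C_\infty-pF)=0$) and big (since $C_\infty^2=p>0$; I take $p\ge1$, the case $\FF_0=\PP^1\times\PP^1$ being analogous). Under this hypothesis the GIT construction of stable pairs yields a quasi-projective scheme $\M^p(r,k,n)$; since the framing $\phi\colon\E_{\vert C_\infty}\iso\cO_{C_\infty}^{\oplus r}$ is an isomorphism, every such pair is stable (for the relevant range of the stability parameter) and none is strictly semistable, so the moduli space is fine. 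Two elementary facts will be used throughout: $C_\infty\cong\PP^1$, and any $\E$ occurring here is locally free in a neighbourhood of $C_\infty$ (being torsion free with $\E_{\vert C_\infty}$ locally free).

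For the local structure I would start from
$$0\to\E(-C_\infty)\to\E\to\E_{\vert C_\infty}\to0,$$
apply $\Hom(\E,-)$, and use that $\Ext^i(\E,\E_{\vert C_\infty})\cong H^i(C_\infty,\cO_{C_\infty})^{\oplus r^2}$, which is $\C^{r^2}$ for $i=0$ and vanishes for $i>0$. The long exact sequence then reads
$$0\to\Hom(\E,\E(-C_\infty))\to\Hom(\E,\E)\to\C^{r^2}\to\Ext^1(\E,\E(-C_\infty))\to\Ext^1(\E,\E)\to0.$$
In the deformation theory of the framed sheaf, $\Hom(\E,\E(-C_\infty))$ is the space of infinitesimal framed automorphisms, $\Ext^1(\E,\E(-C_\infty))$ is the tangent space (the displayed sequence presenting it as an extension of $\Ext^1(\E,\E)$ by the deformations of the framing modulo $\mathrm{Aut}(\E)$), and the obstructions lie in $\Ext^2(\E,\E(-C_\infty))$. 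So $T_{[\E]}\M^p(r,k,n)\cong\Ext^1(\E,\E(-C_\infty))$ once we know that the first and the last of these groups vanish.

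Those vanishings, $\Hom(\E,\E(-C_\infty))=0$ and $\Ext^2(\E,\E(-C_\infty))=0$, follow the same pattern, exploiting that the relevant twist restricts to a negative line bundle on $C_\infty$. A homomorphism $\psi\colon\E\to\E(-C_\infty)$ restricts on $C_\infty$ to a section of $\cO_{C_\infty}(-C_\infty)^{\oplus r^2}\cong\cO_{\PP^1}(-p)^{\oplus r^2}$, hence vanishes there, so $\psi$ factors through $\E(-2C_\infty)\subset\E(-C_\infty)$; iterating, $\psi$ factors through $\E(-mC_\infty)$ for every $m\ge1$. But $\mathcal{H}om(\E,\E)$ is a nonzero torsion-free sheaf, so $\Hom(\E,\E(-mC_\infty))=H^0(\FF_p,\mathcal{H}om(\E,\E)(-mC_\infty))=0$ for $m\gg0$: a nonzero section would embed $\cO(mC_\infty)$ into $\mathcal{H}om(\E,\E)$, which is impossible once $m\,(C_\infty\cdot H)$ exceeds the maximal slope of $\mathcal{H}om(\E,\E)$ for an ample $H$ (here $C_\infty\cdot H>0$ as $C_\infty$ is nef and nonzero). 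Hence $\psi=0$. For the other group, Serre duality on $\FF_p$ gives $\Ext^2(\E,\E(-C_\infty))\cong\Hom(\E,\E(K_p+C_\infty))^\vee$, and since adjunction on the rational curve $C_\infty$ gives $K_p\cdot C_\infty=-2-p$, the twist $K_p+C_\infty$ restricts to $\cO_{\PP^1}(-2)$ on $C_\infty$; the same restriction-and-iteration argument then gives $\Hom(\E,\E(K_p+C_\infty))=0$. Vanishing of the obstruction space gives smoothness, so at every point $\dim\M^p(r,k,n)=\dim\Ext^1(\E,\E(-C_\infty))=-\chi(\E,\E(-C_\infty))$; a Hirzebruch--Riemann--Roch computation on $\FF_p$, using the intersection numbers recalled above together with $c_1(\E)=kC$, $C^2=-p$ and $\Delta(\E)=n$, makes all $k$- and $p$-dependent terms cancel and gives $\chi(\E,\E(-C_\infty))=-2rn$, whence $\dim\M^p(r,k,n)=2rn$.

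Finally, irreducibility: since the moduli space is smooth this is the same as connectedness, and I regard it as the one genuinely global point. For $r=1$ it follows from $\M^p(1,k,n)\cong X_p^{[n]}$ together with Fogarty's irreducibility of the Hilbert scheme of points on a smooth surface. For $r>1$ I would deduce it from the ADHM/quiver presentation of $\M^p(r,k,n)$ in \cite{Rava}, which exhibits the moduli space as a connected variety of linear-algebraic data (and reconfirms smoothness and the value $2rn$); alternatively, one can degenerate an arbitrary framed sheaf, through its restriction to $X_p$, to a split one and verify that the components so obtained all lie in a single one. This global connectedness --- rather than the deformation theory or the Riemann--Roch bookkeeping above --- is where I expect the real work to be.
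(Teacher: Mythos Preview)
Your proposal follows the same strategy as the paper's sketch: embed $\M^p(r,k,n)$ in the Huybrechts--Lehn moduli of stable pairs to obtain quasi-projectivity and fineness, identify the tangent space via the deformation theory of framed sheaves, and deduce smoothness and the dimension from the vanishing of $\Ext^0(\E,\E(-C_\infty))$ and $\Ext^2(\E,\E(-C_\infty))$ together with Riemann--Roch. Your restriction-and-iteration argument for those two vanishings is exactly the content of King's Proposition~3.2.1, which the paper simply cites; so here you have unpacked what the paper leaves as a reference, and your Serre-dual twist $K_p+C_\infty=-C_\infty+(p-2)F$ matches the paper's.

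The one point of genuine divergence is irreducibility. The paper claims that for a suitable choice of the stability polynomial the moduli of stable pairs is already irreducible, with $\M^p(r,k,n)$ sitting inside it as an open dense subset; you instead propose to extract connectedness from the ADHM description in \cite{Rava} (or by a degeneration argument). Both routes are by appeal to outside input, and the paper's is arguably the less explicit of the two, so your instinct that this is where the real work lies is sound.
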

\begin{proof} We only give a sketch of the proof here, details will be provided elsewhere. There is a notion of stability for pairs $(\E,f)$ (due to Huybrechts and Lehn \cite{HL1,HL2}), where $\E$ is a torsion-free sheaf on a projective variety
$X$, and $f$ is a morphism $\E\to\F$, where $\F$ is a sheaf supported on a divisor $D\subset X$.
This notion depends on the choice of a polynomial. Taking $X=\FF_p$ and
$D=C_\infty$, one can choose this polynomial so that every framed sheaf on $\FF_p$ is stable in this sense. As a consequence of the theory developed in \cite{HL1,HL2} (that is, by applying Geometric Invariant Theory), the moduli space of stable pairs is  a quasi-projective variety, which is
\emph{fine}, i.e., it has a universal sheaf (see below). The moduli space of framed sheaves
 $\M^p(r,k,n)$ is an open dense subset of the space of stable pairs, and therefore is an irreducible quasi-projective variety (one gets irreducibility by suitably choosing the above-mentioned polynomial). Moreover, the restriction of the universal sheaf on the moduli space of stable pairs provides a universal sheaf on  $\M^p(r,k,n)$.
As we shall later, this also allows one to identity the tangent bundle to the moduli space,
and in particular, provides an isomorphism  $T_{[\E]} \M^p(r,k,n)\simeq \Ext^1(\E,\E(-C_\infty))$.

The statements about the smoothness of the moduli space and its dimension follow from
$$ \Ext^0(\E,\E(-C_\infty)) = \Ext^2(\E,\E(-C_\infty)) =0 .
$$
Indeed, once this is known, the Rieman-Roch theorem implies that
$\dim \Ext^1(\E,\E(-C_\infty))=2rn$. By Serre duality, the $\Ext^2$ group may be written
as $\Ext^0(\E,\E(-C_\infty+(p-2)F))^\ast$. The vanishing of the two groups can be now
be proved as in Proposition 3.2.1 in \cite{King}.  
\end{proof}

\subsection{Universal sheaf and tangent bundle to the moduli space} Again by the general theory developed in \cite{HL1,HL2}, the moduli space $\M^p(r,k,n)$ is \emph{fine}. This means in particular that there is on  the product $\M^p(r,k,n) \times \FF_p$ a torsion-free 
 sheaf $\mathcal U$ such that for every point $[\E]\in \M^p(r,k,n)$ one has
$\mathcal U_{\vert [\E] \times \FF_p} \simeq \E$. By using the abstract Kodaira-Spencer theory \cite{HL-book,HL1}, this allows one to describe the tangent bundle $T\M^p(r,k,n)$ in the form 
\begin{equation}\label{tangent} T\M^p(r,k,n) \simeq {\mathcal E}xt^1_{\pi_1}(\mathcal U,\mathcal U \xrightarrow{\Phi} \pi_2^\ast \cO_{C_\infty}^{\oplus r}) \end{equation}
where $\pi_1$, $\pi_2$ are the projections of $\M^p(r,k,n) \times \FF_p$ onto its factors, 
and $\Phi$ is the framing of the universal sheaf. Moreover, $ {\mathcal E}xt^1_{\pi_1}(\mathcal U,\cdot)$ is the first right derived functor of the functor ${\pi_1}_\ast\circ{\mathcal H}om(\mathcal U,\cdot)$. By base change, and suitable vanishing theorems, the isomorphism \eqref{tangent} implies that at any  $[\E]\in \M^p(r,k,n)$ one has $T_{[\E]} \M^p(r,k,n)\simeq \Ext^1(\E,\E(-C_\infty))$.

 \subsection{Toric action} For $t_1,t_2\in \C^\ast\times\C^\ast$ we have an action
 $G_{t_1,t_2}\colon\FF_p\to\FF_p$ given by $G_{t_1,t_2}([z_0:z_1:z_2])=[z_0:t_1z_1:t_2z_2]$. 
 This action has four fixed points, i.e., $p_1=([1:0:0],[0:1]$ and $p_2=([1:0:0],[1:0]$
 lying on the exceptional line $C$, and two points lying on the line at infinity $C_\infty$. 
 Morever, both $C$ and $C_\infty$ are fixed under this action. The invariance of $C_\infty$ implies that 
the pullback  $G_{t_1,t_2}^\ast$ defines an action on  $\M^p(r,k,n)$. Moreover we have an action of the diagonal maximal torus of
 $Gl(r,\C)$ on the framing. Altogether we have an action of the torus $T=(\C^\ast)^{r+2}$ on 
 $\M^p(r,k,n)$ given by 
\begin{equation}\label{action}
    (t_1,t_2,e_1,\dots,e_r)\cdot (\E,\phi)
    = \left((G_{t_1,t_2}^{-1})^\ast \E, \phi'\right),
\end{equation}
where $\phi'$ is defined as the composition 
$$   (G_{t_1,t_2}^{-1})^* \E_{\vert C_\infty} 
   \xrightarrow{(G_{t_1,t_2}^{-1})^*\phi}
   (G_{t_1,t_2}^{-1})^* \cO_{C_\infty}^{\oplus r}
   \longrightarrow \cO_{C_\infty}^{\oplus r}
   \xrightarrow{e_1,\dots, e_r} \cO_{C_\infty}^{\oplus r}.
$$
 
 We study now the fixed point sets for the action of $T$ on $\M^p(r,k,n)$. This is basically the same statement as in \cite{NY-I} (see also \cite{NY-L} and \cite{GaLiu}),  but for the sake of completeness we give here a sketch of the proof .
  
 \begin{prop} The fixed points of the action \eqref{action} of $T$ on $\M^p(r,k,n)$ are
 sheaves of the type
\begin{equation}\label{fixpo} \E = \bigoplus_{\alpha =1}^r \cI_{\alpha } (k_\alpha C)\end{equation}
 where $\cI_{\alpha }$ is the ideal sheaf of a 0-cycle $Z_\alpha $ supported on $\{p_1\}\cup\{p_2\}$
 and $k_1,\dots,k_r$ are integers which sum up to $k$. The $\alpha$-th factor in this decomposition corresponds, via the framing, to the $\alpha$-th factor in $\cO_{C_\infty}^{\oplus r}$. Moreover, 
\begin{equation}\label{nfixpo} \  n = \ell +\frac{p}{2r}\left(r\sum_{\alpha =1}^rk_\alpha ^2-k^2\right)
= \ell  + \frac{p}{2r} \sum_{\alpha <\beta} (k_\alpha  -k_\beta)^2\,
\end{equation}
 where $\ell$ is the length of the singularity set of $\E$. 
 \end{prop}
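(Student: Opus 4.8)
The plan is to exploit the fact that $\M^p(r,k,n)$ is a fine moduli space, so that a $T$-fixed point corresponds to a framed sheaf $(\E,\phi)$ for which the pulled-back framed sheaf $((G_{t_1,t_2}^{-1})^\ast\E,\phi')$ is isomorphic to $(\E,\phi)$ for every $(t_1,t_2,e_1,\dots,e_r)\in T$. First I would reduce to the action of the torus $(\C^\ast)^r$ on the framing alone: the framing gives a decomposition $\E_{\vert C_\infty}\simeq\bigoplus_\alpha\cO_{C_\infty}$, and invariance under the diagonal torus on the $e_\alpha$ forces the framed sheaf to split globally as $\E=\bigoplus_{\alpha=1}^r\E_\alpha$, where each $\E_\alpha$ is a rank-one torsion-free framed sheaf on $\FF_p$, framed to the $\alpha$-th factor $\cO_{C_\infty}$, and invariant under $G_{t_1,t_2}$ — this is the standard argument of Nakajima–Yoshioka, which the authors explicitly invoke by citing \cite{NY-I}. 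So the problem is reduced to classifying $G_{t_1,t_2}$-invariant rank-one framed sheaves.

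Next I would analyze a single rank-one factor. A rank-one torsion-free sheaf $\E_\alpha$ with a framing on $C_\infty$ has $\E_\alpha^{\vee\vee}$ a line bundle trivial on $C_\infty$, hence of the form $\cO(k_\alpha C)$ for some integer $k_\alpha$ since $C=C_\infty-pF$ and $C_\infty$ meets all fibres (the line bundles trivial on $C_\infty$ are exactly the multiples of $C$; here one uses that $\mathrm{Pic}(\FF_p)$ is generated by $C_\infty$ and $F$, $C_\infty\cdot F=1$, and $C_\infty|_{C_\infty}$ has degree $p$). Twisting down, $\E_\alpha(-k_\alpha C)$ is an ideal sheaf $\cI_\alpha$ of a 0-dimensional subscheme $Z_\alpha\subset\FF_p$ disjoint from $C_\infty$ (disjointness is forced by the framing being an isomorphism on all of $C_\infty$). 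Imposing $G_{t_1,t_2}$-invariance then forces $Z_\alpha$ to be supported at the torus-fixed points of $\FF_p$ lying off $C_\infty$, namely $p_1$ and $p_2$ on the exceptional curve $C$; at each such point the local ideal must be a monomial ideal. Summing gives \eqref{fixpo}, with $\sum k_\alpha=k$ because $c_1(\E)=\sum c_1(\E_\alpha)=\big(\sum k_\alpha\big)C=kC$.

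Finally I would compute the discriminant. For $\E=\bigoplus_\alpha\cI_\alpha(k_\alpha C)$ one has $c_2(\E)=\sum_\alpha c_2(\cI_\alpha(k_\alpha C))+\sum_{\alpha<\beta}c_1(\cI_\alpha(k_\alpha C))\cdot c_1(\cI_\beta(k_\beta C))$; the self-intersection $C^2=(C_\infty-pF)^2=p-2p=-p$ enters here, while $c_2(\cI_\alpha)=\ell_\alpha:=\mathrm{length}(Z_\alpha)$ and twisting by a line bundle leaves $c_2$ of the ideal sheaf unchanged. Plugging into $\Delta(\E)=c_2(\E)-\frac{r-1}{2r}c_1(\E)^2$ with $c_1(\E)^2=k^2C^2=-pk^2$ and collecting terms yields $n=\ell+\frac{p}{2r}\big(r\sum_\alpha k_\alpha^2-k^2\big)$ where $\ell=\sum_\alpha\ell_\alpha$, and the elementary identity $r\sum_\alpha k_\alpha^2-(\sum_\alpha k_\alpha)^2=\sum_{\alpha<\beta}(k_\alpha-k_\beta)^2$ gives the second form in \eqref{nfixpo}.

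The main obstacle is the middle step: making rigorous the passage from "the isomorphism class of the framed sheaf is $T$-fixed" to "the sheaf literally splits as a direct sum of monomial-ideal sheaves". Because the moduli space is fine one gets a canonical isomorphism for each $t$, but one must check these can be chosen compatibly (a cocycle/rigidity argument using that framed sheaves have no nontrivial automorphisms preserving the framing), and then run the weight-space decomposition with respect to the torus action to extract the splitting and the monomiality of the local ideals; the rest is bookkeeping with intersection numbers on $\FF_p$.
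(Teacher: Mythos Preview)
Your proposal is correct and follows the same strategy as the paper's (sketched) proof: split $\E$ into rank-one $T$-invariant summands via the framing, identify each summand as a twisted ideal sheaf with torus-invariant support at $p_1,p_2$, and then compute the discriminant. The only implementation difference worth noting is that, for the splitting step you flag as the main obstacle, the paper bypasses the cocycle/rigidity discussion by tensoring $\E$ with the sheaf of rational functions $\mathcal K$, choosing a trivialization of the resulting free $\mathcal K$-module $\E'=\E\otimes\mathcal K$ whose restriction to $C_\infty$ is the eigenspace decomposition for $T$, and then setting $\E_\alpha=\E'_\alpha\cap\E$.
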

 \begin{proof} We first check that if $\E$ is fixed under the $T$-action, then
 $\E=\oplus_{\alpha =1}^r\E_\alpha $, where each $\E_\alpha $ is a $T$-invariant rank-one torsion-free sheaf
 on $\FF_p$. Let $\mathcal K$ be the sheaf of rational functions on $\FF_p$. Then $\E'=\E\otimes\mathcal K$ is free as a $\mathcal K$-module. Choose a trivialization $\E'\simeq \oplus_{\alpha =1}^r\E_\alpha '$ which, when restricted to $C_\infty$, provides an eigenspace decomposition for the action of $T$.
 Then set $\E_\alpha =\E_\alpha '\cap\E$, i.e., pick up the holomorphic sections of $\E'_\alpha $. This provides the desired decomposition. 
 
By taking the double dual of $\E$, and using $T$-invariance, we obtain  $ \E^{\ast\ast} \simeq \bigoplus_{\alpha =1}^r \cO_{\FF_p} (k_\alpha C)$, whence $\E$ has the form \eqref{fixpo}. Since the 0-cycles $Z_\alpha$ have to be $T$-invariant and should not be supported on $C_\infty$, they must be supported as claimed. The numerical equality \eqref{nfixpo} follows from a straight calculation.
 \end{proof}
 
 The exact identification of the fixed points is obtained by using some Young tableaux combinatorics. The precursor of this technique is Nakajima's seminal book \cite{NakaBook} where the case of the Hilbert scheme of points of $\C^2$ is treated (from a gauge-theoretic viewpoint, this is the ``rank 1 case'' for framed instantons on $S^4$). This was generalized to higher rank in \cite{Nek,Flupo,BFMT}. 
 
 A word on notation: if $Y$ is a  Young tableau, $\vert Y \vert $ will denote the   number of boxes in it.  In the case at hand, it turns out that to each fixed point one should attach
 an $r$-ple $\{Y^{(i)}_\alpha\}$ of pairs of Young tableaux (so $i=1,2$ and $\alpha=1,\dots,r$).
 Write $Z_\alpha=Z_\alpha^{(1)}\cup Z_\alpha^{(2)}$, where $Z_\alpha^{(i)}$ is supported at $p_i$.
 The Young tableau  $\{Y^{(i)}_\alpha\}$, for $i$ and $\alpha$ fixed, is attached to the ideal sheaf
 $\cI_{Z_\alpha^{(i)}}$ as usual: choose local affine coordinates $(x,y)$ around $p_i$
 and make a correspondence between the boxes of $\{Y^{(i)}_\alpha\}$ and monomials in $x$, $y$
 in the usual way (cf.~\cite{NakaBook}); then $\cI_{Z_\alpha^{(i)}}$ is generated by the monomials lying outside the tableau. Now the identity \eqref{nfixpo} may be written as 
\begin{equation}\label{countboxes}
n =  \sum_\alpha  \left( \vert
Y_\alpha ^1 \vert +\vert Y_\alpha ^2 \vert \right) +
\frac{p}{2r} \sum_{\alpha <\beta} (k_\alpha  -k_\beta)^2\,.\end{equation}
Looking for all collections of Young tableaux and strings of integers $k_1,\dots,k_r$ satisfying this condition together with $\sum_{\alpha=1}^rk_\alpha=k$, one enumerates all
the fixed points. 

The value of $k$ may be normalized in the range $ 0 \le k < r-1$ upon twisting
by $\cO_{\FF_p}(C)$. In the next Corollary we assume that this has been done.
\begin{corol}  The moduli space $\M^p(r,k,n)$ is nonempty if and only if the number 
\begin{equation}\label{c2} n-\frac{r-1}{2r}pk^2\end{equation} is an integer, and the bound
\begin{equation}\label{bound} n \ge \frac{pk}{2r}(r-k)\,. \end{equation}  holds. \end{corol}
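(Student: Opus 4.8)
The plan is to extract the statement directly from the analysis of fixed points in the preceding Proposition, using the fact (from the first Proposition) that $\M^p(r,k,n)$ is irreducible, hence nonempty precisely when it has at least one $T$-fixed point, since $\M^p(r,k,n)$ carries the torus action and a nonempty projective-over-affine (quasi-projective, with proper-enough structure) variety with a torus action has fixed points; more concretely, nonemptiness is equivalent to the existence of a collection $(k_1,\dots,k_r)$ of integers and Young tableaux realizing the combinatorial data of \eqref{fixpo}--\eqref{countboxes}. So the task reduces to: \emph{for which $(r,k,n)$ does there exist $(k_1,\dots,k_r)\in\Z^r$ with $\sum_\alpha k_\alpha=k$ and nonnegative integers $\ell=\sum_\alpha(|Y^1_\alpha|+|Y^2_\alpha|)\ge 0$ such that \eqref{countboxes} holds?} Since any nonnegative integer is the total box count of some tuple of Young tableaux, the existence question is: does there exist $(k_1,\dots,k_r)\in\Z^r$ with $\sum k_\alpha = k$ such that
$$ \ell := n - \frac{p}{2r}\sum_{\alpha<\beta}(k_\alpha-k_\beta)^2 $$
is a nonnegative integer.

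First I would handle the integrality condition. Using the identity $\sum_{\alpha<\beta}(k_\alpha-k_\beta)^2 = r\sum_\alpha k_\alpha^2 - k^2$ from \eqref{nfixpo}, we get $\ell = n - \frac{p}{2}\sum_\alpha k_\alpha^2 + \frac{p k^2}{2r}$. For $\ell\in\Z$ we need $n + \frac{pk^2}{2r} - \frac{p}{2}\sum k_\alpha^2 \in \Z$; since $\frac{p}{2}\sum k_\alpha^2 \equiv \frac{p}{2}\sum k_\alpha \pmod 1$ and $\frac{p}{2}k$ has a fixed class mod $1$, one checks the condition $\ell\in\Z$ is equivalent, independently of the choice of $(k_\alpha)$ with $\sum k_\alpha=k$, to $n - \frac{r-1}{2r}pk^2 \in \Z$ (the number \eqref{c2}); I would verify this short congruence computation carefully, as it is where the coefficient $\frac{r-1}{2r}$ enters. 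Then for the bound: among all integer tuples $(k_1,\dots,k_r)$ with prescribed sum $k$, the quantity $\sum_{\alpha<\beta}(k_\alpha-k_\beta)^2$ is minimized by making the $k_\alpha$ as equal as possible; with the normalization $0\le k<r$ one can take $k$ of the $k_\alpha$ equal to $1$ and the remaining $r-k$ equal to $0$ (this is allowed since $0\le k\le r-1<r$), giving $\sum_{\alpha<\beta}(k_\alpha-k_\beta)^2 = k(r-k)$, hence the minimal value of $\frac{p}{2r}\sum_{\alpha<\beta}(k_\alpha-k_\beta)^2$ is $\frac{pk}{2r}(r-k)$. So a nonnegative $\ell$ exists (for an $\ell$-admissible choice of $k_\alpha$) iff $n\ge \frac{pk}{2r}(r-k)$, which is \eqref{bound}. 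Conversely, if \eqref{c2} is an integer and \eqref{bound} holds, this minimizing tuple yields $\ell\ge 0$ and, by the integrality step, $\ell\in\Z$, so a fixed point — hence a point — of $\M^p(r,k,n)$ exists.

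The one point needing real care, and the main (mild) obstacle, is the claim that minimizing $\sum_{\alpha<\beta}(k_\alpha-k_\beta)^2 = r\sum k_\alpha^2 - k^2$ over integer tuples with $\sum k_\alpha = k$ and $0\le k\le r-1$ is achieved by the $0/1$ tuple: over the reals the minimum of $\sum k_\alpha^2$ is at $k_\alpha = k/r$, and one must argue that rounding to integers with the right sum gives exactly the $0/1$ configuration and that no other integer tuple does better. This is an elementary convexity / rearrangement argument (any two $k_\alpha$ differing by $\ge 2$ can be brought closer, strictly decreasing $\sum k_\alpha^2$ while fixing the sum), but it should be spelled out. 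I would also note in passing that the ``only if'' direction uses that every point of the irreducible variety $\M^p(r,k,n)$ has in its torus-orbit closure a fixed point of the shape \eqref{fixpo}, so nonemptiness of the moduli space forces the combinatorial data to exist; with the normalization $0\le k<r-1$ assumed in the statement, the computation above then gives exactly \eqref{c2} and \eqref{bound}.
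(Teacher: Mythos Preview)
Your argument is essentially the paper's: both directions are reduced to the fixed-point combinatorics of \eqref{countboxes}, together with the observation that over integer tuples summing to $k$ (with $0\le k\le r-1$) the quantity $\sum_{\alpha<\beta}(k_\alpha-k_\beta)^2$ attains its minimum $k(r-k)$ at the $0/1$ tuple. Your integrality computation and the convexity/rearrangement argument for the minimizer are correct.

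Two differences are worth noting. For the ``if'' direction the paper simply writes down an explicit framed sheaf; your approach via the minimizing tuple is equivalent and in fact singles out the sheaf $\cI_Z(C)\oplus\cO(C)^{\oplus(k-1)}\oplus\cO^{\oplus(r-k)}$ that realizes $\ell=n-\frac{pk}{2r}(r-k)$ on the nose. For the ``only if'' direction you assume that a nonempty $\M^p(r,k,n)$ must carry a $T$-fixed point; since the moduli space is only quasi-projective this is not automatic, and your parenthetical about ``proper-enough structure'' does not supply a proof. The paper handles exactly this point by an explicit forward reference: it invokes the Poincar\'e-polynomial formula of Section~3 (established independently of the Corollary), so that absence of fixed points would force all Betti numbers to vanish, which is absurd for a nonempty variety. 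That formula in turn rests on the compactness argument of Section~3.2 (the projective morphism $\pi$ and the homotopy equivalence of $\M^p(r,k,n)$ with a compact fibre of $\pi$). Your ``only if'' should either adopt this Betti-number contradiction or appeal directly to the properness of $\pi$; as written it identifies the right mechanism but leaves the key step unjustified.
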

\begin{proof} Assume that \eqref{c2} and \eqref{bound} are satisfied, and let $\ell=n - \frac{pk}{2r}(r-k)$, which is a nonnegative integer. Let $Z$ be a 0-cycle of length $\ell$ whose support does not meet the line $C_\infty$. Then $\E=\cI_{Z}(kC)\oplus\cO_{\FF_p}^{\oplus(r-1)}$ is a sheaf in 
$\M^p(r,k,n)$.

Conversely, if $\M^p(r,k,n)$ is nonempty, then \eqref{c2} is satisfied. Assume that  the bound \eqref{bound} is not satisfied. Since $ \frac{pk}{2r}(r-k)$ bounds from below the quantity 
$\frac{p}{2r} \sum_{\alpha <\beta} (k_\alpha  -k_\beta)^2$ appearing in Eq.~\eqref{countboxes},
the action of $T$ on $\M^p(r,k,n)$ has no fixed points. In view of the calculation of the Poincar\'e polynomial of $\M^p(r,k,n)$ in terms of the fixed points of the toric action (that we shall perform in the next section, independently of this result!), this implies that all Betti numbers of $\M^p(r,k,n)$ are zero. But this is absurd. 
\end{proof}

\bigskip\section{Poincar\'e polynomial}
In this section we determine the weight decomposition of the toric action on the tangent space to the moduli space at the fixed points,
and use this information to compute the Poincar\'e polynomial of the moduli spaces $\M^p(r,k,n)$.

\subsection{Weight decomposition of the tangent spaces} 
We compute here the weights of the action of the torus $T$ on the irreducible subspaces of the tangent spaces to $\M^p(r,k,n)$ at the fixed points $(\E,\phi)$ of the action. According to the decomposition \eqref{fixpo},  the tangent space $T_{(E,\phi)} \M^p(r,k,n) \simeq \Ext^1(\E,
\E(-C_\infty))$ splits as 
$$
   \Ext^1(\E, \E(-C_\infty))
   = \bigoplus_{\alpha ,\beta}
        \Ext^1(\cI_\alpha (k_\alpha  C), \cI_\beta(k_\beta C - C_\infty)).
$$
The factor $\Ext^1(\cI_\alpha (k_\alpha  C), \cI_\beta(k_\beta C - C_\infty))$
has weight $e_\beta e_\alpha ^{-1}$ under the maximal torus of $Gl(r,\C)$. So we need only
 to describe the weight decomposition with respect to the remaining action of 
 $T^2=\C^\ast\times \C^\ast$. 

From the exact sequence
$
0\to \cI_\alpha  \to \cO \to \cO_{Z_\alpha }\to 0
$
we have in K-theoretic terms
\begin{multline}\label{exts}
\Ext^\bullet(\cI_\alpha (k_\alpha  C),   \cI_\beta(k_\beta C - C_\infty))  =
\Ext^\bullet(\cO(k_\alpha  C), \cO(k_\beta C - C_\infty))  \\  
     -   \Ext^\bullet(\cO(k_\alpha  C), \cO_{Z_\beta}(k_\beta C - C_\infty))     -  
  \Ext^\bullet(\cO_{Z_\alpha }(k_\alpha  C), \cO(k_\beta C - C_\infty))   \\   
   +   \Ext^\bullet(\cO_{Z_\alpha } (k_\alpha  C),
           \cO_{Z_\beta}(k_\beta C - C_\infty)).  
\end{multline}
We should note that 
$$
\Ext^0 (\cO(k_\alpha  C), \cO(k_\beta - C_\infty)) \simeq 
H^0(\FF_p, \cO(- n_{\alpha\beta} C - C_\infty)) = 0 \quad\text{where} \quad n_{\alpha\beta}=k_\alpha  - k_\beta
$$
and analogously
$$
\Ext^2 (\cO(k_\alpha  C), \cO(k_\beta - C_\infty)) \simeq H^0(\FF_p,\cO (( n_{\alpha\beta}-1)C - 2 F)) = 0\,.
\label{ext2-0}
$$
These vanishings are again proved as in Proposition 3.2.1 in \cite{King}.
We are thus left with the computation of the $\Ext^1$ groups
$$
\Ext^1(\cO(k_\alpha  C),\cO(k_\beta C - C_\infty)) =
H^1(\FF_p, \cO(-n_{\alpha\beta} C - C_\infty))
$$
We distinguish three cases according to the values of $n_{\alpha\beta}$. In the first case
 ($n_{\alpha\beta}=0$) one again easily sees that  $H^1(\FF_p, \cO(- C_\infty))=0$.
  
In the second case ($n_{\alpha\beta}>0$) we get 
\be
H^1(\FF_p, \cO(- n_{\alpha\beta}C - C_\infty)) \simeq
\oplus_{d=0}^{n_{\alpha\beta}-1} H^0(\PP^1, \cO_{\PP^1}(pd))
\label{ngt1}
\ee 
We prove this result by induction on $n_{\alpha\beta}>0$ using the  exact sequence
\be
0\to \cO(-(n_{\alpha\beta}+1) C - C_\infty) \to \cO (- n_{\alpha\beta}C - C_\infty) \to
\cO_{C}(-n_{\alpha\beta}C ) \to 0
\label{seqn}
\ee
For $n_{\alpha\beta}=1$ one readily obtains \eqref{ngt1}. On the other hand from
\eqref{seqn} we get
\begin{multline*} 0 \to H^0 (\PP^1, \cO(n_{\alpha\beta}p)) \to H^1(\FF_p, \cO(-(n_{\alpha\beta}+1)C - C_\infty)) \\
\to H^1(\FF_p, \cO(-n_{\alpha\beta}C - C_\infty)) \nn  
  \to H^1(\PP^1, \cO(n_{\alpha\beta}p))   = 0
\end{multline*} 
and by the inductive hypothesis (\ref{ngt1})  
\begin{multline*}
 0 \to H^0 (\PP^1, \cO(n_{\alpha\beta}p)) \to H^1(\FF_p, \cO(-(n_{\alpha\beta}+1)C - C_\infty)) \\
\to \oplus_{d=0}^{n_{\alpha\beta}-1} H^0(\PP^1, \cO_{\PP^1}(pd)) \to 0
\end{multline*} 
so that \eqref{ngt1} is proved. 
Since $H^0(\PP^1, \cO_{\PP^1}(pd))$ is the space of homogeneous
polynomials  of degree $pd$ in two variables, it equals  $\sum_{i=0}^{pd}
t_1^{-i} t_2^{-pd+i}$ in the representation ring of $T^2$.
Finally, we get the weights of this summand of the tangent space as
\be
L_{\alpha \beta} (t_1,t_2) =
e_\beta e_\alpha ^{-1} \sum_{d=0}^{n-1}\sum_{i=0}^{pd} t_1^{-i} t_2^{-pd+i}
= e_\beta e_\alpha ^{-1}
\sum_{{i,j\ge 0, i+j\equiv 0 \ {\rm mod} p, \ i+j \le p(n-1)}}
      t_1^{-i} t_2^{-j}
\label{Lmag0}
\ee

For the third case ($n_{\alpha\beta}<0$) we have, again from the exact sequence \eqref{seqn},
$$
H^1(\FF_p, \cO(- nC - C_\infty)) =
\oplus_{d=1}^{-n_{\alpha\beta}} H^1(\PP^1, \cO_{\PP^1}(-pd))\,.
$$
Therefore in this case we have
\be
L_{\alpha \beta} (t_1,t_2) =e_\beta\,e^{-1}_\alpha
\sum_{{i,j\ge 0,\ i+j\equiv 0 \ {\rm mod} p, \ i+j \le - pn - 2}}
      t_1^{i+1} t_2^{j+1}
\label{Lmin0}
\ee

These terms contribute to the weight decomposition of $T_{(E,\phi)} M_p(r,k,n)$
at the fixed points together with  the remaining terms in \eqref{exts}, which are
essentialy the same as in $\PP^2$ case modulo some rescalings of the arguments
(in a sense, we   look at $X_p$ as the resolution of singularities of $\C^2/\Z_p$,
and rescale the arguments to achieve $\Z_p$-equivariance).
In this way we get
\begin{multline} 
   T_{(E,\phi)} \M^p(r,k,n)
   = \\ \sum_{\alpha ,\beta=1}^r \left(L_{\alpha ,\beta}(t_1,t_2)
     + t_1^{p(k_\beta - k_\alpha )} N_{\alpha ,\beta}^{\vec{Y}_1}(t_1^p,t_2/t_1)
     + t_2^{p(k_\beta - k_\alpha )}
     N_{\alpha ,\beta}^{\vec{Y}_2}(t_1/t_2,t_2^p)\right),\label{tangent_space}
\end{multline} 
where $L_{\alpha ,\beta}(t_1,t_2)$ is given by (\ref{Lmag0})
and (\ref{Lmin0}), while
 \be
N_{\alpha ,\beta}^{\vec{Y}}(t_1,t_2)=e_\beta e_\alpha ^{-1}\times
\left\{\sum_{s \in Y_\alpha }
\left(t_1^{-l_{Y_\beta}(s)}t_2^{1+a_{Y_\alpha }(s)}\right)+\sum_{s
\in Y_\beta}
\left(t_1^{1+l_{Y_\alpha }(s)}t_2^{-a_{Y_\beta}(s)}\right)\right\}.
\label{R4_tangent_space} \ee
Here $\vec Y$ denotes an $r$-ple  of Young tableaux, while for a given box $s$ in the tableau
$Y_\alpha$, the symbols $a_{Y_\alpha}$ and $l_{Y_\alpha}$ denote the ``arm" and ``leg" of $s$ respectively, that is, the number of boxes above and on the right to $s$.

\subsection{Topology of the moduli space} We want to compute the Betti numbers of the moduli space
$\M^p(r,k,n)$ using Morse-theoretic arguments. The first step is to replace the noncompact space
$\M^p(r,k,n)$ with a homotopy equivalent compact space  which contains  all fixed points. To this purpose one defines, generalizing \cite{NY-I}, a projective morphism
$ \pi$ from $ \M^p(r,k,n)$ to a space of ``ideal instantons'' on $\PP^2$. We start by noting that
if $[\E]\in \M^p(r,k,n)$, then
\begin{equation*}\label{cherndown} \operatorname{ch}(f_\ast\E) - \operatorname{ch}(R^1f_\ast\E)
= pr + r(1-p) H +  \left[-n -\frac{pk^2}{2r}+\frac{r}2(p-1)+\frac{k}2(2-p)\right] \omega \end{equation*}
where $H$ and $\omega$ are   the hyperplane   and fundamental class in $\PP^2$, respectively.
(Recall that $f$ is the ``blowdown" morphism $\FF_p\to\PP^2$).
Moreover, the double dual $(f_\ast\E)^{\ast\ast}$ is framed to the sheaf
$$f_\ast \cO_{C_\infty} \simeq \cO_{\ell_\infty}^{\oplus r}\oplus \cO_{\ell_\infty}(-1)^{\oplus r(p-1)}$$
on $\ell_\infty$. Next, we let
$$\M_0(pr,N) =  {\mbox{\Large $\amalg$}}_{\ell=0}^N \M_0^{\text{lf}}(pr,N-\ell)\times\operatorname{Sym}^\ell X_p$$
where $$N= n + \frac{pk^2}{2r} +\frac{k}2(p-2)+\frac{r}2(1-p)+\frac{r^2(p-1)^2}{2}\,,$$  
and
 $\M_0^{\text{lf}}(pr,N-\ell)$ is the moduli space of locally free sheaves on $\PP^2$,
of rank $pr$ and second Chern class $N-\ell$, that are framed to the bundle $f_\ast \cO_{C_\infty}$ on $\ell_\infty$.
We define a projective morphism 
$\pi \colon \M^p(r,k,n) \to \M_0(pr,N) $ by letting
$$ (\E,\phi) \mapsto (((f_\ast\E)^{\ast\ast},\phi')), \operatorname{supp}((f_\ast\E)^{\ast\ast}/f_\ast\E)
+ \operatorname{supp}(R^1f_\ast\E) )$$
where $\phi'$ is the framing induced on $(f_\ast\E)^{\ast\ast}$ as noted above. From \eqref{fixpo}  one  notes
that if $\E$ is a fixed point of the toric action, then 
$$(f_\ast\E)^{\ast\ast} \simeq \F = \cO_{\PP^2}^{\oplus r}\oplus \cO_{\PP^2}(-H)^{\oplus r(p-1)}\,.$$
(Here one can note that $\F$ has no deformations as framed sheaf, i.e.,
$\M_0^{\text{lf}}(pr,N-\nu)$ is a point.)
Thus all fixed points map via $\pi$  to the point $(\F,\nu[0]) \in \M_0(r,N)$, where
$$\nu = n+\frac{pk^2}{2r}+\frac{k}2(p-2)\,.$$
Reasoning as in \cite{NY-L}, one can show that $\pi^{-1}((\F,\nu[0]))$ is homotopy equivalent
to  $\M^p(r,k,n) $. Since $\pi^{-1}((\F,\nu[0])$  is compact, and contains all fixed points, we are justified in computing the Poincar\'e polynomial of $\M^p(r,k,n) $ in terms of the indexes of the fixed points of the toric action.

\subsection{Computing the Poincar\'e polynomial}
We shall closely follow the method presented in \cite{NY-L},
part {\bf 3.4}. We choose a one-parameter subgroup $\lambda (t)
$ of the $r+2$ dimensional torus $T$
$$ \lambda (t)=(t^{m_1},
     t^{m_2},t^{n_1},\dots,t^{n_r}),
$$ by specifying generic weights such that
 $$ m_1=m_2\gg n_1>n_2\cdots>n_r>0 \,.$$
 
As explained in \cite{NY-L}, we have now a different fixed locus, but every fixed point has still the form
\eqref{fixpo}, with $Z_\alpha$ fixed under the action of the diagonal subgroup $\Delta$ of $\C^\ast\times\C^\ast$.
Each component of the fixed point set is parametrized by an $r$-ple of pairs
$((k_1,Y_1),\dots,(k_r,Y_r))$ with
$$n=\sum_{\alpha=1}^r\vert Y_\alpha\vert+\sum_{\alpha<\beta}(k_\alpha-k_\beta)^2
\quad\text{and}\quad \sum_{\alpha=1}^r k_\alpha=k\,.$$

To link with the previous construction, we should notice that each fixed point  $\bar\E$  of the
$\Delta\times T^r$ action corresponding to an $r$-ple $((k_1,Y_1),\dots,(k_r,Y_r))$ 
determines a fixed point $((k_1,\emptyset,Y_1),\dots,(k_r,\emptyset,Y_r))$ of the
$T$-action which lies in the same component of the fixed locus as $\bar\E$. Since the $\Delta\times T^r$-module structure of the tangent space $T_{(\E,\phi)}\M^p(r,k,n)$ at a fixed point of $\Delta\times T^r$
does not  change if we move the fixed point in its component, we may choose   $((k_1,\emptyset,Y_1),\dots,(k_r,\emptyset,Y_r))$. In this case
(\ref{tangent_space})
reduces to 
\be
   T_{(E,\phi)} M_p(r,k,n)
   = \sum_{\alpha ,\beta=1}^r \left(L_{\alpha ,\beta}(t_1,t_1)+
   t_1^{p(k_\beta - k_\alpha )} N_{\alpha ,\beta}^{\vec{Y}}(1,t_1^p)\right), \label{reduced_tangent_space}
\ee By (\ref{R4_tangent_space}) we have 
$$
N_{\alpha ,\beta}^{\vec{Y}}(1,t_1^p)=e_\beta e_\alpha ^{-1}\times
\left(\sum_{s \in Y_\alpha } t_1^{p(1+a_{Y_\alpha }(s))}+\sum_{s \in
Y_\beta} t_1^{-p\,a_{Y_\beta}(s)}\right). 
$$
 Our task now is to
compute the index of the critical points, that is,  the number of terms
in (\ref{reduced_tangent_space}) for which one of the following possibilities
holds: 
\begin{enumerate}
\item the weight of $t_1$ is negative,
\item the weight of $t_1$ is zero and the weight of $e_1$ is negative, 
\item the weights of $t_1$, $e_1$ are zero and the weight of
$e_2$ is negative, 
\item the weights of $t_1$, $e_1$, $e_2$ are zero and weight of $e_3$ is
negative,  
\item[ ] ... ...
\item[($r+1$)] the weights of $t_1$, $e_1$,...,$e_{r-1}$ are zero and the weight
of $e_r$ is negative.
\end{enumerate} 
The contribution of the    diagonal ($\alpha =\beta$)
terms of (\ref{reduced_tangent_space})
turns out to be
$$
\sum_{\alpha =1}^r (\mid Y_\alpha \mid -l(Y_\alpha )).
$$
The nondiagonal ($\alpha \neq\beta$)  terms  can be
rewritten as 
\begin{multline}
\label{nondiagonal_tangent_space}
   \sum_{\alpha <\beta}
   \left(\phantom{\Biggl(}L_{\alpha ,\beta}(t_1,t_1)+L_{\beta,\alpha }(t_1,t_1) \right.
   \\ 
   +   \sum_{s \in Y_\alpha }\left(\frac{e_\beta}{e_\alpha }\, t_1^{p(1+a_{Y_\alpha }(s)+k_\beta -
k_\alpha )}+\frac{e_\alpha }{e_\beta}\,t_1^{p\,(-a_{Y_\alpha }(s)-k_\beta
+ k_\alpha )}\right)  
\\  \left.
+\sum_{s \in
Y_\beta}\left(\frac{e_\beta}{e_\alpha }
\,t_1^{p(-a_{Y_\beta}(s)+k_\beta -
k_\alpha )}+\frac{e_\alpha }{e_\beta}\,t_1^{p\,(1+a_{Y_\beta}(s)+k_\alpha 
- k_\beta)}\right)\right),
\end{multline}

We compute now the $L$ terms in these expressions.  Due to the
formulas (\ref{Lmag0}), (\ref{Lmin0}),  if $k_\alpha  -k_\beta \ge 0$
only the term $L_{\alpha ,\beta}(t_1,t_1)$ contributes to the
index. This contribution is easy to count:
$$
 \sum_{{i,j\ge 0,
i+j\equiv 0 \ {\rm mod} p, \ i+j \le
p(n-1)}}1=\sum_{l=0}^{k_\alpha  -k_\beta -1}(l
p+1)=\frac{(k_\alpha -k_\beta)(p\,(k_\alpha -k_\beta-1)+2)}{2}.
$$
If, instead, $k_\alpha  -k_\beta < 0$ there is a contribution from
the $L_{\beta ,\alpha }(t_1,t_1)$ term which is equal to 
$$
\sum_{{i,j\ge 1, i+j\equiv 0 \ {\rm mod} p, \ i+j \le
p(n-1)}}1=\sum_{l=1}^{k_\beta -k_\alpha  -1}(l
p+1)=\frac{(k_\beta-k_\alpha -1)(p\,(k_\beta-k_\alpha )+2)}{2} 
$$
Summarizing, the contribution of the $L$  terms to the index is
\be\label{l_prime}
 l^\prime_{\alpha ,\beta}=\left\{
\begin{array}{ll}\displaystyle\frac{(k_\alpha -k_\beta)(p\,(k_\alpha -k_\beta-1)+2)}{2}
\quad & \text{if}\quad  k_\alpha  - k_\beta\ge 0,
\\[10pt]
\displaystyle \frac{(k_\beta-k_\alpha -1)(p\,(k_\beta-k_\alpha )+2)}{2} \quad &
\text{otherwise.} \end{array}
\right. 
\ee

The only operation to be yet performed is  the sum over the boxes of $Y_\alpha $ in
(\ref{nondiagonal_tangent_space}). A careful analysis shows that the contribution is $\mid
Y_\alpha  \mid +\mid Y_\beta \mid -n^\prime_{\alpha ,\beta}$, where
\be\label{n_prime}
 n^\prime_{\alpha ,\beta}=\left\{ \begin{array}{l} 
\text{$\sharp$ of columns of $Y_\alpha$ that are longer than $k_\alpha  - k_\beta$ if  $k_\alpha  - k_\beta\ge
0$,} \\
\text{$\sharp$  of columns of  $Y_\beta$ that are longer than  
$k_\beta - k_\alpha -1$ otherwise. }\end{array}
\right.
\ee
With all this information we may compute the desired Poincar\'e polynomial.
\begin{thm} The Poincar\'e polynomial of $\M^p(r,k,n)$ is
$$
P_t(\M^p(r,k,n))=\sum_{\text{\rm fixed}\atop\text{\rm points}} \prod_{\alpha =1}^r t^{2(\mid
Y_\alpha \mid -l(Y_\alpha ))} \prod_{i=1}^\infty
\frac{t^{2 (m^{(\alpha)}_i+1)}-1}{t^2-1}
\prod_{\alpha <\beta}t^{2(l^\prime_{\alpha ,\beta}+\mid Y_\alpha 
\mid +\mid Y_\beta \mid -n^\prime_{\alpha ,\beta})}\,.
$$ \label{thm}
\qed \end{thm}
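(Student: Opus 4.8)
The plan is to apply the Morse-theoretic / Bia{\l}ynicki-Birula machinery exactly as in \cite{NY-L}, part {\bf 3.4}, now that all the ingredients have been assembled. Having replaced $\M^p(r,k,n)$ by the homotopy-equivalent compact space $\pi^{-1}((\F,\nu[0]))$ in the previous subsection, we fix the generic one-parameter subgroup $\lambda(t)=(t^{m_1},t^{m_2},t^{n_1},\dots,t^{n_r})$ with $m_1=m_2\gg n_1>\dots>n_r>0$. The fixed locus of $\lambda$ decomposes into components indexed by the $r$-ples $((k_1,Y_1),\dots,(k_r,Y_r))$ with $\sum k_\alpha=k$ and $n=\sum_\alpha|Y_\alpha|+\sum_{\alpha<\beta}(k_\alpha-k_\beta)^2$; each such component $F_{\vec k,\vec Y}$ is (as in \cite{NY-L}) a product of Jacobian-type factors whose own Poincar\'e polynomial is the product $\prod_{\alpha=1}^r\prod_{i=1}^\infty\frac{t^{2(m^{(\alpha)}_i+1)}-1}{t^2-1}$ appearing in the statement (here $m^{(\alpha)}_i$ are the multiplicities of the columns of $Y_\alpha$, so that $\prod_i$ is really a finite product). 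Since each component is a smooth projective variety with only even cohomology and the BB stratification is perfect, the global Poincar\'e polynomial is the sum over components of $t^{2\,\mu(F_{\vec k,\vec Y})}\,P_t(F_{\vec k,\vec Y})$, where $\mu(F_{\vec k,\vec Y})$ is the Morse index, i.e.\ the number of weights of $\lambda$ on $T_{(\E,\phi)}\M^p(r,k,n)$ that satisfy one of the sign conditions (1)--($r+1$) listed above.

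The core of the proof is then the computation of $\mu(F_{\vec k,\vec Y})$ by summing the contributions already isolated in the text. First I would record the diagonal ($\alpha=\beta$) contribution: by (\ref{reduced_tangent_space}) it is $\sum_\alpha(|Y_\alpha|-l(Y_\alpha))$, which is precisely the classical $\C^2$ count from \cite{NakaBook} and explains the exponent $2(|Y_\alpha|-l(Y_\alpha))$ in the theorem. Next, for each pair $\alpha<\beta$ I would collect the off-diagonal contribution from (\ref{nondiagonal_tangent_space}): the two $L$-terms $L_{\alpha,\beta}(t_1,t_1)+L_{\beta,\alpha}(t_1,t_1)$ contribute $l'_{\alpha,\beta}$ as in (\ref{l_prime})---this was checked directly above by the elementary sums $\sum_{l}(lp+1)$, using that only one of the two $L$-terms has all its $t_1$-exponents of the relevant sign according to the sign of $k_\alpha-k_\beta$---and the box sums $\sum_{s\in Y_\alpha}(\dots)+\sum_{s\in Y_\beta}(\dots)$ contribute $|Y_\alpha|+|Y_\beta|-n'_{\alpha,\beta}$ with $n'_{\alpha,\beta}$ as in (\ref{n_prime}). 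Adding the diagonal and all off-diagonal pieces yields
$$
\mu(F_{\vec k,\vec Y})=\sum_{\alpha=1}^r\bigl(|Y_\alpha|-l(Y_\alpha)\bigr)+\sum_{\alpha<\beta}\bigl(l'_{\alpha,\beta}+|Y_\alpha|+|Y_\beta|-n'_{\alpha,\beta}\bigr),
$$
and plugging this into $\sum_{F} t^{2\mu(F)}P_t(F)$ gives exactly the displayed formula.

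The main obstacle is the careful bookkeeping in the box sums of (\ref{nondiagonal_tangent_space}) leading to $n'_{\alpha,\beta}$: one must decide, term by term over $s\in Y_\alpha$ and $s\in Y_\beta$, when the $t_1$-exponent $p(1+a_{Y_\alpha}(s)+k_\beta-k_\alpha)$, $p(-a_{Y_\alpha}(s)-k_\beta+k_\alpha)$, etc.\ is negative, when it is zero (in which case the sign of $e_\beta/e_\alpha$---hence whether $\beta>\alpha$ or $\alpha>\beta$ in the chosen ordering of the $n_i$---decides the index), and to handle the boundary cases where an exponent vanishes. Tracking the arm statistics $a_{Y_\alpha}(s)$ against the shift $k_\alpha-k_\beta$ is what converts the count into "number of columns of $Y_\alpha$ longer than $k_\alpha-k_\beta$" (resp.\ of $Y_\beta$ longer than $k_\beta-k_\alpha-1$); the asymmetry between the two cases, and the $-1$ shift, come precisely from the zero-weight terms being assigned to $e_\beta$ versus $e_\alpha$. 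Beyond that, one should note that the passage from the $\Delta\times T^r$ fixed locus back to the full $T$-fixed points (used to justify working with $((k_\alpha,\emptyset,Y_\alpha))$) does not alter the $\Delta\times T^r$-module structure of the tangent space, so the index computed from (\ref{reduced_tangent_space}) is the correct one; and one must check that the $\Ext^0$ and $\Ext^2$ vanishings quoted from Proposition 3.2.1 of \cite{King} indeed hold for all the twists entering (\ref{exts}), so that (\ref{tangent_space}) genuinely computes the tangent space with no hidden obstruction space contributing spurious weights.
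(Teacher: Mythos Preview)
Your proposal is correct and follows essentially the same approach as the paper: the theorem is stated with a \qed\ because the proof \emph{is} the preceding discussion (compactification via $\pi^{-1}((\F,\nu[0]))$, choice of the one-parameter subgroup $\lambda$, reduction to the representative $T$-fixed point $((k_\alpha,\emptyset,Y_\alpha))$ in each $\Delta\times T^r$-component, and the index count splitting into the diagonal piece $\sum_\alpha(|Y_\alpha|-l(Y_\alpha))$ plus the off-diagonal $l'_{\alpha,\beta}+|Y_\alpha|+|Y_\beta|-n'_{\alpha,\beta}$), which you have reproduced faithfully. The one point you make explicit that the paper leaves tacit is that the factor $\prod_i\frac{t^{2(m_i^{(\alpha)}+1)}-1}{t^2-1}$ is the Poincar\'e polynomial of the fixed component $F_{\vec k,\vec Y}$ itself in the Bia\l ynicki-Birula formula $\sum_F t^{2\mu(F)}P_t(F)$; note however that these components are products of projective spaces $\prod_{\alpha,i}\PP^{m_i^{(\alpha)}}$ rather than ``Jacobian-type'' varieties.
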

Here  $m_i^{(\alpha)} $ is the number 
of columns in $Y_\alpha$ that have length $i$.

\subsection{Some particular cases} A first, immediate check of Theorem \ref{thm} is that for $r=1$ it should
give the Poincar\'e polynomial of the Hilbert scheme of $n$-point configurations in $X_p$ (and therefore,
of $\PP^1\times \C$). A comparison with the formula in \cite{lothar} tells that this is indeed the case. 
We can also   examine the case of $\M^p(2,0,n)$  in some detail. We set 
$h = k_1=-k_2$ so that $n=p h^2+\sum_{i=1}^\infty i
(m^{(1)}_i+m^{(2)}_i)$. In  view of Eqs.
(\ref{l_prime}) and (\ref{n_prime}) one has
$$
 l^\prime_{1,2}= \left\{
\begin{array}{ll}\text{$h(2+p(2h-1))$} & \text{ if  $h\ge 0$, } \\
\text{$(-2h-1)(-h p+1)$} & \text{if $h< 0$.} \end{array} \right.
$$
$$
n^\prime_{1,2}=\left\{ 
\begin{array}{ll}\text{$ \sum_{i=2h+1}^\infty
m^{(1)}_i$ } & \text{if $h\ge 0$,}
\\
\text{$\sum_{i=-2h}^\infty
m^{(2)}_i$ } & \text{if $h< 0$}. \end{array} \right.  
$$
In this case we may write a generating
function for the Poincare polynomial. One gets 
\begin{multline*}
 \sum_n
P_t(\M^p(2,0,n))q^n=\\ \sum_{k\in \mathbb{Z}}
\sum_{\{m^{(\alpha)}_i\}}\left(q^{pk^2} \prod_{i=1}^\infty
\prod_{\alpha=1,2}q^{im^{(\alpha)}_i}\right)
\times \left( \prod_{i=1}^\infty
\prod_{\alpha=1,2}t^{2(i-1)m^{(\alpha)}_i}\,\,\,
\frac{t^{2(m^{(\alpha)}_i+1)}-1}{t^2-1}\right) \\
 \times \left\{\begin{array}{c} \left(t^{2k(p(2k-1)+2)} \prod_{i=1}^\infty
\prod_{\alpha=1,2}t^{2im^{(\alpha)}_i}\right) \left(
\prod_{i=2k+1}^\infty t^{-2m^{(1)}_1}\right)\quad\text{if}\ k\ge 0 \\
\left(t^{(-2k-1)(-pk+1)} \prod_{i=1}^\infty
\prod_{\alpha=1,2}t^{2im^{(\alpha)}_i}\right) \left(
\prod_{i=-2k}^\infty t^{-2m^{(2)}_1}\right) \quad\text{if}\  k< 0.
\end{array}\right. \end{multline*}
 Performing the summations over
$m^{(\alpha)}_i$ we obtain 
\begin{equation}\label{rk2_poincare}
\begin{split} & \sum_n 
P_t(\M^p(2,0,n))q^n= 
 \\
&  \left(\prod_{i=1}^\infty
\frac{1}{(1-q^i t^{4i})(1-q^it^{4i-2})^2(1-q^it^{4i-4})}\right)  
\\ 
&   \times \left[\sum_{h\ge 0}
   \prod_{i=1}^{2h}\frac{1-q^it^{4i-4}}{1-q^it^{4i}}
\,\,t^{2h(p(2h-1)+2)}q^{ph^2}+
\sum_{h>0}\prod_{i=1}^{2h-1}\frac{1-q^it^{4i-4}}{1-q^it^{4i}}
\,\,t^{2(2h-1)(ph+1)}q^{ph^2} \right]
\end{split}
\end{equation}
 In the case
$p=1$ this result coincides with   Corollary
3.19 of \cite{NY-L}, where it is also shown (identity (3.20)) that  the two terms in square brackets can be further  factorized
  to obtain an elegant product formula. 
  The case $p=2$ is considered in
\cite{sasaki}, where besides usual line bundles $\cO (k_{\alpha}
C)$ with integer $k_\alpha$ the author also considers line bundles with half-integer  Chern class $k_\alpha$ (see
discussion after eq. (3.35) of\cite{sasaki}). It is interesting
that though Eq.~(\ref{rk2_poincare}) in this case can not be
further simplified, incorporating terms corresponding to the
half-integer $k_\alpha$ again leads to a product formula (see
(2.62)-(2.64) of \cite{sasaki}). 
One can speculate that, as we already hinted in the introduction, these
line bundles with fractional Chern classes can be actually interpreted as orbifold bundles.
If one is able to do that, the generating function in 
formula (\ref{rk2_poincare})  could be
fully factorized and expressed in terms of the level $p$ theta
functions.

For  $p=2$ the space  $X_2$ is the $A_1$ ALE space
\cite{KN}, and $\M^2 (r,k,n)$ coincides with the moduli space of $U(r)$ instantons on ALE space $A_1$
with first Chern class $k$ (for details on  instanton counting
on $A_{p-1}$ ALE spaces the reader may refer to \cite{KN,Fucito:2004ry,fmr}). 
In the Appendix we provide evidence that the for $k=0$ the computations performed in the two different 
settings coincide (this has been checked explicitly for contributions involving Young tableaux
with a total number of boxes up to 4).

\section*{Appendix. Comparison with ALE spaces}
We recall from section 3 of \cite{fmr}  the computation of the Poincar\'e
polynomial for the moduli space $M_{A_1}(r,n)$ of  $U(r)$  instantons of zero first Chern class 
and second Chern class $n$ on the ALE space $A_1$, providing evidence that is coincides with the result for the moduli space $\M^2(r,0,n)$.

One should recall that $X_2=A_1$ is the resolution of singularities of the
hyperk\"ahler quotient $\C^2/\Z_2$. To   study the fixed points of the toric action
one can consider the fixed points for the case of $\C^2$ discarding all Young tableaux
that are not $\Z_2$-invariant.
 Fixed points are thus given by $r$ $2$-colored  Young tableaux
$\{ (Y_\alpha, \varepsilon_\alpha) \}$, $\alpha =1,2,\ldots r$,
$\varepsilon_\alpha =0,1$. Each $Y_\alpha$ is a tableau whose   boxes
are filled with $0$ or $1$ as follows: the corner box is
filled with $\varepsilon_\alpha$ and the neighbouring boxes either
in horizontal or in vertical directions are always filled with
different numbers. The weight decomposition around fixed points is
given by: 
$$
T_{(\vec{Y},\,\vec{\varepsilon})}M_{A_1}(r,n)=\sum_{\alpha,
\beta=1}^r
\left(N_{\alpha,\beta}^{\vec{Y}}(t_1,t_2)\right)^{[\Z_2]},
$$
where $N_{\alpha,\beta}^{\vec{Y}}$ is given by
(\ref{tangent_space}) and the superscript $\Z_2$ indicates that
one should pick up only $\Z_2$-invariant terms (i.e. terms which
are invariant under $e_\alpha \rightarrow
(-1)^{\varepsilon_\alpha} e_\alpha$, $t_1\rightarrow -t_1$,
$t_2\rightarrow -t_2$).

Let $N_0$ ($N_1$) be the number of corner boxes filled with $0$
($1$) (of course, $N_0+N_1=r$) and $k_0$ ($k_1$) the total number
of boxes filled with $0$ ($1$). Then the condition $c_1=0$ implies
$N_1+2(k_0-k_1)=0$ and  $n= k_0+N_1/4$. Below we perform explicit
calculation of Poincare polynomials for $r=2$ and $n=1$ or $n=2$.
In both cases $N_0=2$, $N_1=0$ hence $k_0=k_1=1$
in the former and $k_0=k_1=2$ in the latter case.

(i) $k_0=k_1=1$.
Here are the four appropriate fixed points and corresponding
weight decompositions: \bea
\left(\bullet_0,\tinyyoung{1,0}\,\right):\hspace{0.5cm}
\frac{t_1}{t_2}+t_2^2+\frac{t_1t_2e_1}{e_2}+\frac{e_2}{e_1}\,,
\hspace{0.5cm}\text{index}=2; \nonumber \eea \bea
\left(\bullet_0,\tinyyoung{01}\,\right):\hspace{0.5cm} t_1^2+
\frac{t_2}{t_1}+\frac{t_1t_2e_1}{e_2}+\frac{e_2}{e_1}\,,
\hspace{0.5cm}\text{index}=1; \nonumber \eea \bea
\left(\tinyyoung{1,0}\,,\bullet_0\right):\hspace{0.5cm}
\frac{t_1}{t_2}+t_2^2+\frac{t_1t_2e_2}{e_1}+\frac{e_1}{e_2}\,,
\hspace{0.5cm}\text{index}=1; \nonumber \eea \bea
\left(\tinyyoung{01}\,,\bullet_0\right):\hspace{0.5cm} t_1^2+
\frac{t_2}{t_1}+\frac{t_1t_2e_2}{e_1}+\frac{e_1}{e_2}\,,
\hspace{0.5cm}\text{index}=0, \nonumber \eea where to calculate the index
of a critical point we  ordered the generators according to 
$t_2\gg e_1>e_2\gg t_1$. Thus for the Poincar\'e polynomial we get
$$
P_t(M_{A_1}(2,1))=\sum_{\text{fixed}\atop\text{points}}t^{2\,\text{index(fixed\,\,
point)}}=1+2 t^2+t^4,
$$
which is easily seen to match with
(\ref{rk2_poincare}).

(ii) $k_0=k_1=2$.
In this case we have $16$ fixed points: \bea
\left(\bullet_0,\tinyyoung{0101}\,\right):\hspace{0.5cm}
t_1^2+t_1^4+\frac{t_2}{t_1^3}+\frac{t_2}{t_1}+\frac{t_1t_2e_1}{e_2}+
\frac{t_1^3t_2e_1}{e_2}+\frac{e_2}{e_1}+\frac{e_2}{t_1^2e_1}\,,
\hspace{0.5cm}\text{index}=2; \nonumber \eea \bea
\left(\bullet_0,\tinyyoung{1,010}\,\right):\hspace{0.5cm}
t_1^2+\frac{t_1^3}{t_2}+\frac{t_2}{t_1}+\frac{t_2^2}{t_1^2}+\frac{t_1t_2e_1}{e_2}+
\frac{t_1^3t_2e_1}{e_2}+\frac{e_2}{e_1}+\frac{e_2}{t_1^2e_1}\,,
\hspace{0.5cm}\text{index}=3; \nonumber \eea \bea
\left(\bullet_0,\tinyyoung{10,01}\,\right):\hspace{0.5cm}
t_1^2+\frac{t_1}{t_2}+\frac{t_2}{t_1}+t_2^2+\frac{t_1t_2e_1}{e_2}+
\frac{t_1^2t_2^2e_1}{e_2}+\frac{e_2}{e_1}+\frac{e_2}{t_1t_2e_1}\,,
\hspace{0.5cm}\text{index}=3; \nonumber \eea \bea
\left(\tinyyoung{0}\,,\tinyyoung{1,01}\,\right):\hspace{0.5cm}
\frac{t_1^2e_1}{e_2}+\frac{2t_1t_2e_1}{e_2}+\frac{t_2^2e_1}{e_2}+
\frac{2e_2}{e_1}+\frac{t_1e_2}{t_2e_1}+\frac{t_2e_2}{t_1e_1}\,,
\hspace{0.5cm}\text{index}=3; \nonumber \eea \bea
\left(\tinyyoung{01}\,,\tinyyoung{01}\,\right):\hspace{0.5cm}
2t_1^2+\frac{2t_2}{t_1}+\frac{t_1^2e_1}{e_2}+\frac{t_2e_1}{t_1e_2}
+ \frac{t_1^2e_2}{e_1}+\frac{t_2e_2}{t_1e_1}\,,
\hspace{0.5cm}\text{index}=1; \nonumber \eea \bea
\left(\tinyyoung{01}\,,\tinyyoung{1,0}\,\right):\hspace{0.5cm}
t_1^2+\frac{t_1}{t_2}+\frac{t_2}{t_1}+t_2^2+\frac{e_1}{e_2}+
\frac{t_1t_2e_1}{e_2}+\frac{e_2}{e_1}+\frac{t_1t_2e_2}{e_1}\,,
\hspace{0.5cm}\text{index}=2. \nonumber \eea There is no need to do
calculations for the remaining $10$ fixed points since they can be
recovered from those mentioned above via (simultaneous)
transposition of pairs of tableaux (this amounts to replacement
$t_1\leftrightarrow t_2$) or/and exchanging their order (this is
equivalent to the replacement $e_1\leftrightarrow e_2$). Picking
up all the indices of the fixed points for the Poincar\'e polynomial
we get
$$P_t(M_{A_1}(2,2)))=1 + 2t^2 + 5t^4 + 5t^6 +
3 t^8\,,
$$
 which again is what one expects also from
(\ref{rk2_poincare}). In fact, using a Mathematica code we did
similar comparisons also for contributions with $n=3,4$, again finding a perfect
agreement with (\ref{rk2_poincare}).

\par\bigskip\frenchspacing

\end{document}